\documentclass[12pt]{amsart}

\usepackage{amssymb,latexsym}

\usepackage{color}

\usepackage{enumerate}

\usepackage[T1]{fontenc}

 \usepackage[french,english]{babel}


\newtheorem{thm}{Theorem}[section]

\newtheorem{lem}[thm]{Lemma}

\theoremstyle{definition}

\numberwithin{equation}{section}

\renewcommand{\leq}{\leqslant}

\renewcommand{\geq}{\geqslant}

\newcommand{\p}{\mathcal{P}}

\newcommand{\re}{\textup{Re}}

\newcommand{\V}{\mathbf{v}}

\newcommand{\F}{\mathcal{F}}

\newcommand{\newabstract}[1]{%
  \par\bigskip
  \csname otherlanguage*\endcsname{#1}%
  \csname captions#1\endcsname
  \item[\hskip\labelsep\scshape\abstractname.]
}

\begin{document}

\baselineskip=17pt

\title[Large values of Dirichlet $L$-functions for characters of fixed order]{A note on large values of Dirichlet $L$-functions for characters of fixed order at $1/2<\sigma\leq 1$
}

\author{Youness Lamzouri}

\address{
Universit\'e de Lorraine, CNRS, IECL, 
F-54000 Nancy, France}

\email{youness.lamzouri@univ-lorraine.fr}


\begin{abstract} 
In this note, we use a simple argument to show the existence of large values of conjecturally sharp size for Dirichlet $L$-functions attached to primitive characters of fixed order at $\sigma\in (1/2, 1]$.
 More precisely, for every fixed integer $g\geq 2$ we prove the existence of a primitive character $\chi$ of order $g$ and conductor $Q\asymp x$ such that 
$
|L(1,\chi)|
\geq
e^\gamma\left(\log\log x+\log\log\log x-\log(2\log g)+o(1)\right).
$
We also show that for every fixed $1/2<\sigma<1$ there exists a primitive character $\chi$ of order $g$ and conductor $Q\asymp x$ such that
$
\log |L(\sigma,\chi)|
\geq
\left(C_g(\sigma)+o(1)\right)
(\log x)^{1-\sigma}(\log\log x)^{-\sigma},
$
for some explicit positive constant $C_g(\sigma).$
Previously, such bounds were known only conditionally on the Generalized Riemann Hypothesis, and even then only in the special cases $g=2$ and $g=3$. 
\end{abstract}

\subjclass[2020]{Primary 11M06, 11M20}

\maketitle


\section{Introduction} 

A central problem in analytic number theory is to understand the size of $L$-functions in the critical strip. This question is closely connected with several classical arithmetic problems, including class numbers of quadratic extensions of $\mathbb{Q}$, which are related to values of quadratic Dirichlet $L$-functions at $1$, and the non-vanishing of the Riemann zeta function $\zeta(s)$ on the line $\re(s)=1$, which is equivalent to the prime number theorem.

Let $1/2<\sigma<1$ be fixed. In \cite{Mo77} Montgomery proved the lower bound
\begin{equation}\label{Eq:Montgomery}
\max_{t\in [T,2T]}\log |\zeta(\sigma+it)|
\geq
(c(\sigma)+o(1))(\log T)^{1-\sigma}(\log\log T)^{-\sigma},
\end{equation}
for some positive constant $c(\sigma)$, and conjectured that this is best possible up to the value of the constant $c(\sigma)$.  On the line $\sigma=1$, Granville and Soundararajan \cite{GrSo03} proved that
$
\max_{t\in [T, 2T]}|\zeta(1+it)|
\geq
e^\gamma\left(\log_2 T+\log_3 T-\log_4 T+O(1)\right),$
where here and throughout, $\log_j$ denotes the $j$-fold iterated logarithm.
This was later improved by Aistleitner, Mahatab and Munsch \cite{AMM19}, who removed the $-\log_4 T$ term using a variant of the long resonator method, obtaining a bound which is conjectured to be best possible up to the constant in the $O(1)$ term.  Subsequently, Aistleitner, Mahatab, Munsch and Peyrot \cite{AMMP19} adapted the long resonator method to the family of Dirichlet characters modulo $q$. They proved that for every fixed $1/2<\sigma<1$ and all sufficiently large $q$, we have
\begin{equation}\label{Eq:Montgomery2}
\max_{\substack{\chi \bmod q\\ \chi\neq \chi_0}}\log |L(\sigma,\chi)|
\geq
(C(\sigma)+o(1))(\log q)^{1-\sigma}(\log_2 q)^{-\sigma},
\end{equation}
for some positive constant $C(\sigma)$, where $\chi_0$ is the principal character modulo $q$. In the case $\sigma=1$, they showed that 
$$
\max_{\substack{\chi \bmod q\\ \chi\neq \chi_0}}|L(1,\chi)|
\geq
e^\gamma\left(\log_2 q+\log_3 q-\log(2\log 2)-1+o(1)\right).
$$ 

The situation is more complicated for the family of quadratic characters, 
since prior to the present work the analogous bounds in this setting were only known conditionally on the Generalized Riemann Hypothesis (GRH). Indeed, Granville and Soundararajan \cite{GrSo03}
showed under GRH that there are many primes $q\leq x$ for which
$$
L(1,\chi_q)
\geq
e^\gamma\left(\log_2 x+\log_3 x-\log(2\log 2)+o(1)\right),
$$
where $\chi_q=(\frac{\cdot}{q})$ is the Legendre symbol modulo $q$. This idea was extended by the author \cite{La11} to the range $1/2<\sigma<1$, again assuming GRH, giving lower bounds analogous to \eqref{Eq:Montgomery} and \eqref{Eq:Montgomery2} for $L(\sigma,\chi_q)$ for prime discriminants $q\leq x.$

These conditional results  
are based on the following idea, which goes back to Littlewood and Chowla and was later used by Montgomery \cite{Mo71} in his work on the least quadratic non-residue. More precisely, one restricts to a thin family of prime discriminants for which the corresponding quadratic characters have value $1$ at the first few primes. This makes the initial Euler product large, and then quadratic reciprocity and the GRH are used to control the error terms coming from the off-diagonal contributions in such a thin family. 

As observed by Aistleitner, Mahatab, Munsch and Peyrot \cite{AMMP19}, adapting the long resonator method to the family of quadratic characters $\chi_d$, with $d$ varying over fundamental discriminants, appears to be a difficult problem. The main obstruction is that the orthogonality relations in this family are much subtler than in the family of all Dirichlet characters, while the positivity properties required by the long resonator method are no longer directly available. More recently, Darbar and Maiti \cite{DaMa25} succeeded in implementing a long resonator argument in this setting under GRH, proving analogous lower bounds in the larger family of quadratic characters attached to fundamental discriminants $|d|\leq x$.
 
For cubic characters at $s=1$, Darbar, David, Lalin and Lumley \cite{DDLL24}, using the ideas of Granville and Soundararajan \cite{GrSo03}, obtained under GRH the lower bound
$$
|L(1,\chi)|
\geq
e^\gamma\left(\log_2 x+\log_3 x-\log(2\log 3)+o(1)\right)
$$
for many cubic characters $\chi$ of prime conductor at most $x$.

The goal of this note is to prove unconditional lower bounds for primitive characters of any fixed order $g$. Our argument avoids the resonance method and is instead inspired by the conditional construction of Granville--Soundararajan \cite{GrSo03}, where the values of the character at the first few primes are prescribed. However, unlike in \cite{GrSo03}, our method is unconditional and uses conductors that are products of two primes. The main point is that, by considering characters of conductors $q_1q_2$, where $q_1$ and $q_2$ are primes congruent to $1\pmod g$ and satisfying certain conditions, we obtain a thin family with a useful ``almost-positivity'' property. More precisely, we will show that the average of $\re(\chi(n))$ over our family is bounded below by a small negative error, while it  equals $1$ for all integers whose prime factors are all small. This almost-positivity property is a substitute for the positivity input that is usually needed in the resonance method.
We now state our main results. 

\begin{thm}\label{Thm:Main1}
Let $g\geq 2$ be a fixed integer and $x$ be large. 
There exists a primitive character $\chi\bmod Q$ of order $g$ with $x/4\leq Q\leq x$ such that 
$$ \re(L(1, \chi))\geq e^{\gamma} \left(\log\log x+\log\log\log x-\log(2\log g)+o(1)\right).$$
\end{thm}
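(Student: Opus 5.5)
We will construct a thin family of characters with conductor $q_1q_2$, in the spirit of the introduction, and average $\re(L(1,\chi))$ over it. Fix a parameter $y=(\tfrac12-\epsilon)\log x\cdot\log_2 x/\log g$ (to be optimized). Let $P(y)=\prod_{p\leq y}p$. Since $q\equiv 1\pmod g$ is prime, for each prime $p$ the residue class of $p$ modulo $g$-th powers in $(\mathbb{Z}/q)^*$ is well defined. Take primes $q_1,q_2\equiv 1\pmod g$ of size about $\sqrt{x}$, and a character $\psi_i$ of order $g$ mod $q_i$. Consider the characters $\chi=\psi_1^{a}\psi_2^{b}$ with $a,b$ coprime to $g$; we want $\chi$ primitive of order $g$. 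The key trick for almost-positivity is to pair $\chi$ with the family $\{\psi_1^a\overline{\psi_2}^{a}\}$ type structure.

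More concretely, I would choose $q_2$ depending on $q_1$ so that $\psi_2(p)=\psi_1(p)$ for all $p\leq y$ (possible by Chebotarev in the Kummer extension $\mathbb{Q}(\zeta_g, p^{1/g}:p\leq y)$ of degree about $g^{\pi(y)}$, which is $\leq x^{1/2-\epsilon'}$ by the choice of $y$). Then take $\chi=\psi_1\overline{\psi_2}$, of conductor $q_1q_2$ and order $g$, with $\chi(p)=1$ for all $p\leq y$. The averaged object is over pairs $(q_1,q_2)$; orthogonality over the pairs should show that $\mathbb{E}[\chi(n)]=\mathbb{E}|\,\cdot\,|^2$-type expressions, giving $\mathbb{E}\,\re\chi(n)\geq -\text{(small)}$ for all $n$, with equality $1$ when $n$ is $y$-smooth. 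Indeed, if we average $\chi(n)=\psi_1(n)\overline{\psi_2(n)}$ over $q_1$ and $q_2$ independently within the same Chebotarev classes, the average factorizes as $|\mathbb{E}_{q}\psi_q(n)|^2\geq 0$ up to the error coming from making the prescriptions for $p\le y$; this is the "almost-positivity". The error terms are controlled by unconditional Chebotarev/large sieve averages (Bombieri–Vinogradov type), which suffices since we only need an average over many $q$.

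Then write $L(1,\chi)=\sum_{n\leq x^{A}}\chi(n)/n+o(1)$ for most $\chi$ (using a zero-density/large-sieve estimate to discard exceptional characters, or Pólya–Vinogradov-type smoothing; truncation at $n\le \exp((\log x)^{c})$ is valid for all but few $\chi$). Average $\re$: the $y$-smooth terms contribute $\sum_{n\ y\text{-smooth}}1/n\cdot(\ldots)=\prod_{p\le y}(1-1/p)^{-1}=e^\gamma\log y+o(1)$, and the remaining terms contribute at least $-o(1)$ by almost-positivity, since after restricting to $n$ with $y$-smooth part fixed, the non-smooth part has nonnegative averaged contribution up to error. Plugging $\log y=\log_2x+\log_3x-\log(2\log g)+o(1)$ gives the bound, and some $\chi$ in the family exceeds the average. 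Conductor $q_1q_2\in[x/4,x]$ by taking $q_i\in[\sqrt x/2,\sqrt x]$.

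The main obstacle is the almost-positivity step unconditionally: one needs that averaging over primes $q$ in a Chebotarev class with $g^{\pi(y)}\approx x^{1/2}$ conditions still gives enough primes and uniform control of $\mathbb{E}\psi_q(n)$; I would first try to avoid Chebotarev by instead summing over all pairs $(q_1,q_2)$ weighted by $\prod_{p\le y}\frac1g\sum_{j}\psi_1^j(p)\overline{\psi_2}^j(p)$, which expands into a nonnegative combination of $|\sum_{q}\psi_q(m)|^2$ terms, making positivity exact and leaving only the counting of the diagonal.
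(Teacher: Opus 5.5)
\textbf{There is a genuine gap in your main route.} The Chebotarev step fails unconditionally. Prescribing $\psi_q(p)$ for all $p\leq y$ is a splitting condition in a Kummer field of degree about $g^{\pi(y)}\approx x^{1/2-o(1)}$. You need primes $q\asymp\sqrt{x}$ in a prescribed class, and the expected number of these is only $\pi(\sqrt{x})/g^{\pi(y)}=x^{o(1)}$.

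For $g=2$, reciprocity turns this into a condition on $q$ modulo roughly $4\prod_{p\leq y}p=e^{(1+o(1))y}$, a modulus vastly larger than $q$ itself. No unconditional effective Chebotarev theorem and no Bombieri--Vinogradov-type average says anything in this range. Even under GRH, individual classes are controlled only when $g^{\pi(y)}$ is at most about $x^{1/4}$ for primes of size $\sqrt{x}$. So you cannot guarantee that classes are populated. Without that, the factorisation into $|\mathbb{E}_q\psi_q(n)|^2$ within a class, with the removed diagonal costing $1/|\text{class}|$, has no unconditional basis.

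\textbf{Your last paragraph is the way out, and it is exactly the paper's proof.} The weight $\prod_{p\leq y}\frac1g\sum_{j=0}^{g-1}\bigl(\psi_{q_1}(p)\overline{\psi_{q_2}(p)}\bigr)^j$ is the indicator that $q_1,q_2$ lie in the same class $\p_g(x,y;\V)$. So the weighted off-diagonal sum is the sum over the paper's family $\F_g(x,y)$, and your expansion is the identity $\sum_{\chi\in\F_g(x,y)}\re(\chi(n))=\frac12\sum_{\V}\bigl|\sum_{q\in\p_g(x,y;\V)}\psi_q(n)\bigr|^2-\frac12\sum_{q}|\psi_q(n)|^2$.

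What you leave unstated is the ingredient that actually replaces Chebotarev: a lower bound for the total weight. The $g^{\pi(y)}$ classes partition the $\pi_g\asymp\sqrt{x}/\log x$ primes $q\equiv 1\pmod g$ in $(\sqrt{x}/2,\sqrt{x})$. By Cauchy--Schwarz (equivalently, the $m=1$ term of your expansion with $n=1$), the number of same-class pairs is $\geq \pi_g^2/g^{\pi(y)}-\pi_g$. This needs no information about any individual class. The diagonal then costs $\ll g^{\pi(y)}\log x/\sqrt{x}$ per $n$.

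For the truncation, drop the zero-density option, since discarding characters breaks the exact positivity and would need separate accounting. Instead use P\'olya--Vinogradov with $N=\sqrt{x}(\log x)^4$, which is valid for every $\chi$ in the family. The total error is then $\ll g^{\pi(y)}(\log x)^2/\sqrt{x}$. This is $o(1)$ once $g^{\pi(y)}\leq\sqrt{x}/(\log x)^{3}$, which holds for $y=\frac{1}{2\log g}\log x\log_2 x\,(1-3/\log_2 x)$. With this choice $\log y=\log_2 x+\log_3 x-\log(2\log g)+o(1)$, as required.
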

In particular, in the cases $g=2, 3$ we recover the same lower bounds as in the conditional results of Granville--Soundararajan \cite{GrSo03} for quadratic characters, and those of  
Darbar, David, Lalin and Lumley \cite{DDLL24} for cubic characters.

For fixed $g\geq 2$ and fixed $1/2<\sigma<1$, we prove the following result, which in particular recovers the author's conditional lower bounds \cite{La11} in the quadratic case.

\begin{thm}\label{Thm:Main2}
Let $1/2<\sigma<1$ be fixed. Let $g\geq 2$ be a fixed integer and $x$ be large. There exists a primitive character $\chi\bmod Q$ of order $g$ with $x/4\leq Q\leq x$ such that 
$$ \re (L(\sigma, \chi)) \geq \exp\left(\big(C_g(\sigma)+o(1)\big)\frac{(\log x)^{1-\sigma}}{(\log\log x)^{\sigma}}\right),$$
where $C_g(\sigma)
=
\frac{1}{1-\sigma}
\left(\frac{\sigma-1/2}{\sigma\log g}\right)^{1-\sigma}.$
\end{thm}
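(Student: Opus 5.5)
We follow the same scheme as Theorem \ref{Thm:Main1}, now at $s=\sigma$ and with a longer set of prescribed primes. Put $y=c\log x\log_2 x$ with $c$ a constant to be optimized. Consider the family $\F$ of primitive characters $\chi=\psi_1\psi_2$ of order $g$ and conductor $Q=q_1q_2\in[x/4,x]$. Here $q_1,q_2\equiv 1\pmod g$ are primes of size about $\sqrt x$, and $\psi_i$ are characters of order $g$ mod $q_i$. We require $\chi(p)=1$ for every prime $p\leq y$.

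\textbf{Step 1: counting the family.} The condition $\chi(p)=1$ means the Frobenius of each $p\le y$ lies in a fixed class of the Kummer-type extension generated by $g$-th roots of the primes up to $y$. The plan is to count pairs $(q_1,q_2)$ for which the product structure makes this condition cheap. For instance, take $\psi_2=\overline{\psi_1}$-twisted data, so that $\psi_1(p)=\overline{\psi_2(p)}$ for all $p\le y$. This pushes the Chebotarev condition onto a two-prime correlation, which can be handled unconditionally by large sieve or orthogonality over $q_2$. The result should be $|\F|\gg x^{1-o(1)}g^{-\pi(y)}$. This is nontrivial provided $\pi(y)\log g\le(\tfrac12-\epsilon')\log x$, essentially. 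That is the source of the constraint $y\approx \frac{(\sigma-1/2)}{\sigma\log g}\cdot\frac{\log x\log_2 x}{?}$. More precisely, the optimization below will balance the loss $g^{-\pi(y)}$ against the error term $x^{1/2-\sigma}$-type tails.

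\textbf{Step 2: short Dirichlet polynomial.} For $\re s=\sigma$, use a standard zero-density or smoothed approximation. We want $L(\sigma,\chi)\approx\sum_{n}\chi(n)n^{-\sigma}e^{-n/Z}$ with $Z=x^{A}$, valid for all $\chi$ in $\F$ up to a negligible exceptional subset. Alternatively, work with the full smoothed sum plus a contour shift, where the shifted integral is $O(Z^{1/2-\sigma}x^{\epsilon})$ by convexity. Then average $\re\chi(n)$ over $\F$. By the almost-positivity property, the average equals $1$ when $n$ is $y$-smooth and is $\geq -\delta$ with small $\delta$ otherwise. We then get
\[\frac{1}{|\F|}\sum_{\chi\in\F}\re L(\sigma,\chi)\geq\sum_{P^+(n)\le y}\frac{e^{-n/Z}}{n^\sigma}-\text{error}.\]

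\textbf{Step 3: evaluate the Euler product.} We have $\prod_{p\le y}(1-p^{-\sigma})^{-1}=\exp\big((1+o(1))\frac{y^{1-\sigma}}{(1-\sigma)\log y}\big)$. The truncation by $e^{-n/Z}$ costs only a factor $1-o(1)$ by Rankin's trick, provided $\log Z\gg y^{1-\sigma}$-scale quantities. Substituting $y=\frac{\sigma-1/2}{\sigma\log g}\log x\log_2 x$, and noting $\log y\sim\log_2 x$, gives exactly $C_g(\sigma)(\log x)^{1-\sigma}(\log_2x)^{-\sigma}$.

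\textbf{Main obstacle.} The hard part is controlling the error term. The negative contribution of non-smooth $n$ is at most $\delta\sum_n n^{-\sigma}e^{-n/Z}\approx\delta Z^{1-\sigma}$. This must be smaller than the main term, and it forces $\delta\ll Z^{\sigma-1}$. Now $\delta$ comes from character sum cancellation over $q_1,q_2$, roughly $|\F|^{-1}x^{1/2+o(1)}$. Hence we need $g^{\pi(y)}\le x^{(\sigma-1/2)/\sigma}$-type room after choosing $Z$ optimally, with $Z$ a power balancing the contour error $Z^{1/2-\sigma}$. This tradeoff is precisely where the factor $\frac{\sigma-1/2}{\sigma}$ arises. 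I would first try $Z=x^{1/\sigma}$ and verify that the exponents match.
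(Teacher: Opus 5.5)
Your outline has the same architecture as the paper: pairs $\psi_{q_1}\overline{\psi_{q_2}}$ agreeing at all $p\le y$, a truncated Dirichlet series, averaging, and Rankin for the Euler product. There is a genuine gap, though, in the two places where the content lies.

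\textbf{The lower bound $\ge -\delta$ is assumed, and the mechanism you give for it would not work.} It cannot come from ``character sum cancellation over $q_1,q_2$''. That would require equidistribution of $\psi_q(n)$ over primes $q$ with prescribed values $\psi_q(p)$, $p\le y$. This is a Chebotarev-type statement in a Kummer extension of degree roughly $g^{\pi(y)}$, here a fixed power of $x$, and it is out of reach without GRH. Your Step 1 likewise needs neither Chebotarev nor the large sieve. The missing idea is pure positivity:
\begin{itemize}
\item Sort the primes $q\equiv 1\pmod g$ in $(\sqrt x/2,\sqrt x)$ by the vector $(\psi_q(p_j))_{j\le \pi(y)}\in(\mu_g)^{\pi(y)}$, and let $\F$ consist of all pairs within a class.
\item Cauchy--Schwarz over the $g^{\pi(y)}$ classes gives $|\F|\gg x g^{-\pi(y)-2}(\log x)^{-2}$.
\item The family sum is a sum of squares minus a diagonal:
\[
\sum_{\chi\in\F}\re\chi(n)=\frac12\sum_{\V}\Big|\sum_{q\in\p_g(x,y;\V)}\psi_q(n)\Big|^2-\frac12\sum_{q}|\psi_q(n)|^2\geq -\frac12\,\pi(\sqrt x).
\]
\end{itemize}
Hence $\delta\ll g^{\pi(y)+2}\log x/\sqrt x$, with no cancellation input at all.

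\textbf{Your exponent bookkeeping in Step 2 is wrong, and as set up it does not yield $C_g(\sigma)$.}
\begin{itemize}
\item Convexity on $\re s=1/2$ gives $|L(\frac12+it,\chi)|\ll (Q(1+|t|))^{1/4+\epsilon}$. The shifted integral is therefore $O(Z^{1/2-\sigma}x^{1/4+\epsilon})$, not $O(Z^{1/2-\sigma}x^{\epsilon})$; the latter would be Lindel\"of.
\item With your trial value $Z=x^{1/\sigma}$, the error is $\delta Z^{1-\sigma}\approx g^{\pi(y)}x^{(2-3\sigma)/(2\sigma)}$. This forces $\pi(y)\log g\le \frac{3\sigma-2}{2\sigma}\log x$, which gives a strictly smaller constant for every $\sigma<1$ and nothing at all for $\sigma\le 2/3$.
\end{itemize}
What you need is the shortest unconditional approximation. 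Partial summation plus P\'olya--Vinogradov gives
\[
L(\sigma,\chi)=\sum_{n\le N}\chi(n)n^{-\sigma}+O\big(\sqrt Q\log Q\,N^{-\sigma}\big),
\]
so $N=x^{1/(2\sigma)}(\log x)^4$ suffices. Then $\delta N^{1-\sigma}\ll g^{\pi(y)+2}x^{1/(2\sigma)-1}(\log x)^3$. Making this $o(1)$ means $g^{\pi(y)+2}\le x^{1-1/(2\sigma)}(\log x)^{-4}$. With $\pi(y)\sim y/\log_2 x$ this is $y\sim\frac{\sigma-1/2}{\sigma\log g}\log x\log_2 x$, which is precisely where $C_g(\sigma)$ comes from. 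Within your smoothed framework, shifting the contour to $\re s=0$ instead of $\re s=1/2$ would recover the same length, since there $|L(it,\chi)|\ll (Q(1+|t|))^{1/2}\log(Q(2+|t|))$.
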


\subsection*{Acknowledgments} The author is supported by a junior chair of the Institut Universitaire de France. This work was completed while the author was visiting the Centre de Recherches Mathématiques in Montréal, which he thanks for its excellent working conditions.

\section{Proof of Theorems \ref{Thm:Main1} and \ref{Thm:Main2}}

\begin{lem}\label{Lem:ApproxL}
    Let $1/2\leq \sigma\leq 1$ be fixed. Let $\chi \pmod Q$ be a primitive Dirichlet character and $N\geq Q^{1/(2\sigma)}(\log Q)^{4}$ be a real number. Then we have 
$$ L(\sigma, \chi)=\sum_{n\leq N} \frac{\chi(n)}{n^{\sigma}}+ O\left(\frac{1}{\log Q}\right).
$$

\end{lem}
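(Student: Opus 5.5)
We would prove the lemma with a smoothed approximate functional equation, or more simply with a Perron-type contour argument combined with the convexity bound. Our first choice is the smoothing approach. Fix a smooth weight and write, for $s=\sigma$,
$$\sum_{n\le N}\frac{\chi(n)}{n^{\sigma}}=\frac{1}{2\pi i}\int_{(c)} L(\sigma+w,\chi)\frac{N^{w}}{w}\,dw$$
in a truncated form: take $c=1-\sigma+1/\log N$ and truncate at height $T$. Truncated Perron gives an error
$$\ll \sum_n \frac{1}{n^{\sigma+c}}\min\Bigl(1,\frac{1}{T|\log(N/n)|}\Bigr)\ll \frac{N^{1-\sigma}\log N}{T}.$$
This is $O(1/\log Q)$ provided $T\ge N^{1-\sigma}(\log Q)^2$, with a slight extra care for the terms $n$ near $N$, which we can take $N$ half-integral to handle. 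Since $\chi$ is primitive and nonprincipal, $L(s,\chi)$ is entire. We then shift the contour to $\re(w)=-\delta$, where $\delta=\sigma-1/2+\varepsilon$ is chosen so that we land slightly left of the critical line, say at $\re(\sigma+w)=1/2-1/\log Q$. Only the pole at $w=0$ is crossed, and it contributes exactly $L(\sigma,\chi)$.

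The remaining integrals are bounded using convexity (Phragmén–Lindelöf together with the functional equation). Near the critical line this gives $L(1/2+it,\chi)\ll (Q(|t|+1))^{1/4}\log$, and just left of it $|L(1/2-\eta+it,\chi)|\ll (Q(1+|t|))^{1/4+\eta/2}\log$.

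- The horizontal segments at height $\pm T$ contribute roughly $(QT)^{1/4}\log Q\,/T$ times $\max(N^{c},N^{-\delta})$. This is small because $T$ is large.
- The left vertical line contributes about $N^{-(\sigma-1/2)}(QT)^{1/4}\log^2 Q$.

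Here lies the main obstacle: the $T^{1/4}$ factor, with $T\approx N^{1-\sigma}$, competes with the decay $N^{-(\sigma-1/2)}$, and the convexity bound may be too lossy to reach exactly the threshold $N\ge Q^{1/(2\sigma)}(\log Q)^4$.

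To get the sharp threshold, we would instead use a smoothed sum plus the exact approximate functional equation. Write
$$L(\sigma,\chi)=\sum_n \frac{\chi(n)}{n^\sigma}V(n/N)+\varepsilon(\chi)\Bigl(\tfrac{Q}{\pi}\Bigr)^{1/2-\sigma}(\cdots)\sum_n\frac{\overline\chi(n)}{n^{1-\sigma}}\widetilde V(nN/Q),$$
whose dual sum has effective length $Q/N\le Q^{1-1/(2\sigma)}$. Estimating the dual sum by Pólya–Vinogradov via partial summation gives a bound of size roughly $Q^{1/2-\sigma}\sqrt Q\log Q\,(N/Q)^{1-\sigma}\cdot(\ldots)$. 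Alternatively, and more simply, we avoid the dual sum entirely: we apply Pólya–Vinogradov directly to the tail $\sum_{n>N}\chi(n)n^{-\sigma}$ by partial summation, which gives $\ll \sqrt Q\log Q\,N^{-\sigma}$. Then $N^{\sigma}\ge Q^{1/2}(\log Q)^{4\sigma}\ge Q^{1/2}(\log Q)^2$ for $\sigma\ge1/2$, so the tail is $\ll 1/\log Q$.

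This last route is clean and uses only that $L(\sigma,\chi)=\sum_{n\ge1}\chi(n)n^{-\sigma}$ converges for $\sigma>0$, $\chi$ nonprincipal. It is therefore the proof we would actually write, and the contour discussion above is unnecessary.
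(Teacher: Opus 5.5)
Your proposal is correct. The route you settle on, partial summation on the tail $\sum_{n>N}\chi(n)n^{-\sigma}$ with the P\'olya--Vinogradov bound $\sqrt{Q}\log Q$, giving an error $\ll \sqrt{Q}\log Q\,N^{-\sigma}\ll 1/\log Q$ since $N^{\sigma}\geq Q^{1/2}(\log Q)^{4\sigma}\geq Q^{1/2}(\log Q)^2$, is exactly the paper's proof; the contour-shifting and approximate functional equation discussion before it is not needed.
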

\begin{proof}
By partial summation and the  Pólya--Vinogradov inequality we have
\begin{align}\label{Eq:ApproxPV}
L(\sigma,\chi)&=\sum_{n\leq N}\frac{\chi(n)}{n^\sigma}+\sigma\int_N^\infty \frac{\sum_{n\leq t}\chi(n)}{t^{1+\sigma}}\,dt
-
\frac{1}{N^\sigma}\sum_{n\leq N}\chi(n)\nonumber\\
& =\sum_{n\leq N}\frac{\chi(n)}{n^\sigma}+ O\left(\frac{\sqrt{Q}\log Q}{N^{\sigma}}\right).
\end{align}
Using our assumption on $N$ completes the proof. 
\end{proof}

For each prime $q\equiv 1 \pmod g$, let $\ell_q$ be the smallest primitive root modulo $q$. We define $\psi_q$ to be the primitive character modulo $q$ such that 
\begin{equation}\label{DefCharRoot}
\psi_q(\ell_q)= e^{2\pi i/g}.
\end{equation}
Note that $\psi_q$ has order $g$. Furthermore, it follows from \cite[Lemma 2.2]{La17} that for distinct primes $q_1, q_2$ such that $q_1\equiv q_2\equiv 1\pmod g$, the character 
\begin{equation}\label{Eq:DefCharChi}
\chi_{q_1, q_2}:=\psi_{q_1}\overline{\psi_{q_2}}
\end{equation}
is primitive, of order $g$ and conductor $q_1q_2.$

For a positive integer $j$, we let $p_j$ be the $j$-th prime number. Let $\mu_g$ be the set of $g$-th roots of unity. Let $2\leq y\leq (\log x)^2$ be a real number and put $k=\pi(y)$. For any
$
\V=(v_1,\ldots,v_k) \in(\mu_g)^k,
$ let $\mathcal{P}_g(x, y; \V)$ be the set of primes $q\equiv 1 \pmod g$ with
$
x^{1/2}/2<q<x^{1/2}
$
and such that
$
\psi_q(p_j)=v_j
$
for all $j\leq k$. We now define the following family of characters
$$
\mathcal F_g(x, y)
=
\left\{
\chi_{q_1, q_2} : 
\ \frac{x^{1/2}}{2}<q_1<q_2<x^{1/2},
\ \text{and } (q_1,q_2)\in \p_g(x, y; \V)^2
\text{ for some } \V\in(\mu_g)^k
\right\}.
$$
Such a construction was introduced by the author in \cite{La17} to prove Littlewood-type lower bounds for $L(1,\chi)$ in families of primitive characters of fixed order. The main new observation here is that this family also carries a useful almost-positivity property. More precisely, the following lemma shows that for every positive integer $n$, the average of $\re(\chi(n))$ over $\chi\in\mathcal F_g(x,y)$ is bounded below by a small negative error.

\begin{lem}\label{Lem:Positivity}
Let $g\geq 2$ be a fixed integer, and $x$ be large. Let $y\geq 2$ be a real number such that $g^{\pi(y)+2}\leq c_0\sqrt{x}/\log x$ for some suitably small constant $c_0>0$. Then we have
\begin{equation}\label{Eq:SizeFamily}
|\mathcal F_g(x,y)|
\gg
\frac{x}{g^{\pi(y)+2}(\log x)^2},   
\end{equation}
and for all positive integers $n\geq 1$, we have
\begin{equation}\label{Eq:Positivity}
\frac{1}{|\mathcal F_g(x,y)|}
\sum_{\chi\in \mathcal F_g(x,y)} \re(\chi(n))
\geq
-c_1\frac{g^{\pi(y)+2}\log x}{\sqrt{x}},  
\end{equation}
for some absolute constant $c_1>0.$
\end{lem}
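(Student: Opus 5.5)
The plan is to write the sum over the family as a sum over vectors $\V$, and to reduce positivity to a perfect square. Set $P_{\V}=\p_g(x,y;\V)$. For fixed $\V$, the pairs $(q_1,q_2)\in P_{\V}^2$ with $q_1<q_2$ give characters $\psi_{q_1}\overline{\psi_{q_2}}$, and for distinct pairs these are distinct, since the conductor determines $q_1q_2$. Summing over unordered pairs of distinct elements, we get
$$\sum_{\substack{q_1<q_2\\ q_1,q_2\in P_{\V}}}\re\big(\psi_{q_1}(n)\overline{\psi_{q_2}(n)}\big)=\frac12\Big|\sum_{q\in P_{\V}}\psi_q(n)\Big|^2-\frac12\sum_{q\in P_{\V}}|\psi_q(n)|^2\geq -\frac12|P_{\V}|.$$
The sets $P_{\V}$ are disjoint for different $\V$, since $\psi_q(p_j)$ determines $\V$. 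Hence summing over $\V$ gives
$$\sum_{\chi\in\F_g(x,y)}\re(\chi(n))\geq -\tfrac12\sum_{\V}|P_{\V}|\leq \pi(x^{1/2})\ll \sqrt{x}/\log x$$
in absolute value. We must check that no character is counted twice across different $\V$; this holds because each $\chi_{q_1,q_2}$ determines $\{q_1,q_2\}$ via its conductor, and then $\V$ is determined. So \eqref{Eq:Positivity} follows once we have the lower bound \eqref{Eq:SizeFamily}:
$$\frac{\sqrt{x}/\log x}{|\F_g(x,y)|}\ll \frac{g^{\pi(y)+2}\log x}{\sqrt{x}}.$$

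For \eqref{Eq:SizeFamily}, we write $|\F_g(x,y)|=\sum_{\V}\binom{|P_{\V}|}{2}$. The sets $P_{\V}$ partition the set $S$ of primes $q\equiv1\pmod g$ in $(x^{1/2}/2,x^{1/2})$. Note that $\psi_q(p_j)\in\mu_g$ always, since $\psi_q$ has order $g$, and $\gcd(p_j,q)=1$ because $p_j\leq y<q$. By the prime number theorem in progressions with $g$ fixed, $|S|\gg \sqrt{x}/\log x$. By Cauchy--Schwarz, with $M=g^{\pi(y)}$ classes,
$$\sum_{\V}|P_{\V}|^2\geq |S|^2/M.$$
Hence
$$|\F_g(x,y)|\geq \frac12\Big(\frac{|S|^2}{M}-|S|\Big).$$
The hypothesis $g^{\pi(y)+2}\leq c_0\sqrt{x}/\log x$ with $c_0$ small gives $|S|/M\geq 2$, say. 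So $|\F_g|\gg |S|^2/M\gg x/(g^{\pi(y)}(\log x)^2)$, which is stronger than needed; the extra $g^2$ only helps.

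The main obstacle is only bookkeeping. We need the lower bound $|S|\gg_g\sqrt{x}/\log x$, which is standard Siegel--Walfisz/PNT for the fixed modulus $g$. We also need the implied constants to be uniform enough that $c_1$ can be taken absolute; the factor $g^2$ in the statement gives room to absorb the $g$-dependence of $|S|\asymp \sqrt{x}/(\varphi(g)\log x)$, since $\varphi(g)\leq g$. Combining the two bounds then yields \eqref{Eq:Positivity}.
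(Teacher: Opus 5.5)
Your proof is correct and follows the paper's argument. For almost-positivity it uses the same perfect-square identity $\frac12\sum_{\V}\bigl|\sum_{q\in\p_g(x,y;\V)}\psi_q(n)\bigr|^2-\frac12\sum_q|\psi_q(n)|^2$; for the size of the family it uses Cauchy--Schwarz over the $g^{\pi(y)}$ classes $\V$, with $g^2$ absorbing $\varphi(g)^2$. Your explicit checks that distinct pairs give distinct characters (via the conductor) and that the sets $\p_g(x,y;\V)$ partition the primes (since $y<\sqrt{x}/2$) fill in details the paper leaves implicit.
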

\begin{proof}
Let $k=\pi(y)$. We start by proving \eqref{Eq:SizeFamily}. By the definition of $\F_g(x, y)$ we have 
\begin{equation}\label{Eq:CalculSizeF}
|\mathcal F_g(x, y)|
=
\sum_{\V\in(\mu_g)^k}\binom{|\p_g(x, y; \V)|}{2}
=
\frac12\sum_{\V\in(\mu_g)^k}|\p_g(x, y; \V)|^2
+
O\left(\frac{\sqrt x}{\log x}\right),
\end{equation}
since
\begin{equation}\label{Eq:PrimesAP}
\sum_{\V\in(\mu_g)^k}|\p_g(x, y; \V)|
\sim
\frac{\sqrt x}{\phi(g)\log x},   
\end{equation}
by the prime number theorem in arithmetic progressions. 
By the Cauchy--Schwarz inequality and \eqref{Eq:PrimesAP} we obtain
$$
\sum_{\V\in(\mu_g)^k}|\p_g(x, y; \V)|^2
\geq
\frac{1}{g^k}
\left(\sum_{\V\in(\mu_g)^k}|\p_g(x, y; \V)|\right)^2
\gg
\frac{1}{g^{k+2}}\frac{x}{(\log x)^2}.
$$
Combining this bound with \eqref{Eq:CalculSizeF} completes the proof of \eqref{Eq:SizeFamily}.

We now prove 
\eqref{Eq:Positivity}. We observe that 
$$
\sum_{\chi\in \mathcal F_g(x,y)} \re \big(\chi(n)\big)
=
\sum_{\V\in(\mu_g)^k}
\sum_{\substack{\frac{\sqrt{x}}2<q_1<q_2<\sqrt{x}\\
(q_1,q_2)\in \p_g(x,y;\V)^2}}
\re\left(\psi_{q_1}(n)\overline{\psi_{q_2}(n)}\right)
$$
$$
=
\frac12
\sum_{\V\in(\mu_g)^k}
\Bigg|
\sum_{\substack{\frac{\sqrt{x}}2<q<\sqrt{x}\\
q\in \p_g(x,y;\V)}}
\psi_q(n)
\Bigg|^2
-
\frac{1}{2}\sum_{\substack{\frac{\sqrt{x}}2<q<\sqrt{x}\\
q\equiv 1 \bmod g}}|\psi_q(n)|^2.
$$
By the prime number theorem in arithmetic progressions and \eqref{Eq:SizeFamily}, we deduce that
$$
\frac{1}{|\mathcal F_g(x,y)|}
\sum_{\chi\in \mathcal F_g(x,y)} \re(\chi(n))
\geq -\frac{1}{2|\mathcal F_g(x,y)|}\sum_{\substack{\frac{\sqrt{x}}2<q<\sqrt{x}\\
q\equiv 1 \bmod g}}|\psi_q(n)|^2
\geq
-c_1\frac{g^{k+2}\log x}{\sqrt{x}}
$$
for some positive absolute constant $c_1$. This completes the proof.
\end{proof}

\begin{proof}[Proof of Theorems \ref{Thm:Main1} and \ref{Thm:Main2}]
We will prove both theorems at once, so we let $1/2<\sigma\leq 1$ be fixed. Define
$$
y:=\left(\frac{1-\frac{1}{2\sigma}}{\log g}\right)\log x\log_2 x\left(1-\frac{3}{\log_2x}\right),
$$ and let $k=\pi(y)$. 
Then by the prime number theorem, it follows that for $x$ sufficiently large we have 
\begin{align*}
k &\leq \frac{y}{\log y}\left(1+\frac{2}{\log y}\right)\leq \left(\frac{1-\frac{1}{2\sigma}}{\log g}\right)\log x\left(1-\frac{3}{\log_2x}\right)\left(1+\frac{2}{\log_2x}\right)\\
& \leq \left(\frac{1-\frac{1}{2\sigma}}{\log g}\right)\log x\left(1-\frac{1}{2\log_2x}\right),
\end{align*}
since $\log y\geq \log_2 x$. Therefore, we deduce that 
\begin{equation}\label{Eq:ConditionK}
g^{k+2}\leq \frac{x^{1-1/(2\sigma)}}{(\log x)^4}.
\end{equation}
In particular, since $1-1/(2\sigma)\leq 1/2$, the hypothesis of Lemma \ref{Lem:Positivity} is satisfied for $x$ large. Let $N= x^{1/(2\sigma)}(\log x)^4$. By Lemma \ref{Lem:ApproxL} we have
\begin{align}
\frac{1}{|\mathcal F_g(x,y)|}\sum_{\chi\in\mathcal F_g(x,y)} \re (L(\sigma,\chi))
&=
\frac{1}{|\mathcal F_g(x,y)|}\sum_{\chi\in\mathcal F_g(x,y)}
\left(
\re\sum_{n\leq N}\frac{\chi(n)}{n^\sigma}
+
O\left(\frac{1}{\log x}\right)
\right)\nonumber\\
&=
\sum_{n\leq N}\frac{1}{n^\sigma}
\frac{1}{|\mathcal F_g(x,y)|}\sum_{\chi\in\mathcal F_g(x,y)}\re(\chi(n))
+
O\left(\frac{1}{\log x}\right).
\end{align}
Let $P^+(n)$ be the largest prime factor of $n$ with the standard convention $P^+(1)=1$. If $P^+(n)\leq y$, then for all $\chi=
\chi_{q_1, q_2}\in \F_g(x, y)$ we have 
$
\chi_{q_1, q_2}(n)=\psi_{q_1}\overline{\psi_{q_2}}(n)=1.
$
Otherwise, we will use  \eqref{Eq:Positivity} to lower bound the sum over $\chi\in \F_g(x, y)$. This gives 
\begin{equation}\label{Eq:LowerBoundReL}
\frac{1}{|\mathcal F_g(x,y)|}\sum_{\chi\in\mathcal F_g(x,y)} \re (L(\sigma,\chi))
\geq
\sum_{\substack{n\leq N\\ P^+(n)\leq y}}
\frac{1}{n^\sigma}
+
O\left(g^{k+2}x^{1/(2\sigma)-1}(\log x)^3 + \frac{1}{\log x}\right).
\end{equation}
On the other hand, by a simple application of Rankin's trick  we get
\begin{align*}
\sum_{\substack{n>N\\  P^+(n)\leq y}}
\frac{1}{n^\sigma}
&\ll
\frac{1}{N^{1/4}}
\sum_{\substack{n\geq 1\\  P^+(n)\leq y}}
\frac{1}{n^{\sigma-1/4}}
\ll
\frac{1}{N^{1/4}}
\exp\left(O\left(\sum_{p\leq y}\frac{1}{p^{\sigma-1/4}}\right)\right)
\\
&\ll
\frac{1}{x^{1/8}}
\exp\left(y^{3/4}\right)\ll x^{-1/10}.
\end{align*}
Hence we deduce that 
$$
\sum_{\substack{n\leq N\\ P^+(n)\leq y}}
\frac{1}{n^\sigma}
=
\prod_{p\leq y}
\left(1-\frac{1}{p^\sigma}\right)^{-1}
+
O\left(x^{-1/10}\right).
$$
Inserting this estimate in \eqref{Eq:LowerBoundReL} and using \eqref{Eq:ConditionK} we obtain 
$$\max_{\chi\in \F_g(x, y)} \re(L(\sigma, \chi))\geq \frac{1}{|\mathcal F_g(x,y)|}\sum_{\chi\in\mathcal F_g(x,y)} \re (L(\sigma,\chi))\geq  \prod_{p\leq y}\left(1-\frac{1}{p^{\sigma}}\right)^{-1}+O\left(\frac{1}{\log x}\right).
$$
Finally, by the prime number theorem we have 
$$ 
\prod_{p\leq y}\left(1-\frac{1}{p^{\sigma}}\right)^{-1}= \begin{cases} e^{\gamma} \log y + O(1/\log y),
& \text{ if } \sigma=1,\\[1em]
\displaystyle
\exp\left(
\frac{y^{1-\sigma}}{(1-\sigma)\log y}
+O_{\sigma}\left(\frac{y^{1-\sigma}}{(\log y)^2}\right)
\right), & \text{ if } 1/2<\sigma<1.\end{cases}
$$
Moreover, with our choice of $y$ we have 
$$
\log y
=
\log_2 x+\log_3 x+\log\left(\frac{1-\frac{1}{2\sigma}}{\log g}\right)+o(1),
$$
and
$$
\frac{y^{1-\sigma}}{(1-\sigma)\log y}
=
\left(
\frac{1}{1-\sigma}
\left(\frac{1-\frac{1}{2\sigma}}{\log g}\right)^{1-\sigma}
+o(1)
\right)
\frac{(\log x)^{1-\sigma}}{(\log_2 x)^\sigma}.
$$
Combining the above estimates completes the proof.

\end{proof}

\end{document}